\renewcommand{\S}[1]{\ensuremath{\mathcal{S}_{#1}}} % General % \renewcommand because \S{} defined for section symbol
\newcommand{\E}[1]{\ensuremath{\mathcal{E}_{#1}}}   % Evens
\renewcommand{\O}[1]{\ensuremath{\mathcal{O}_{#1}}}   % Odds
\newcommand{\I}[1]{\ensuremath{\mathcal{I}_{#1}}}    % Involutions
\newcommand{\Sav}[2]{\ensuremath{\mathcal{S}_{#1}(#2)}} % General
\newcommand{\Eav}[2]{\ensuremath{\mathcal{E}_{#1}(#2)}}   % Evens
\newcommand{\Oav}[2]{\ensuremath{\mathcal{O}_{#1}(#2)}}   % Odds
\newcommand{\sav}[2]{\ensuremath{S_{#1}(#2)}} % General
\newcommand{\eav}[2]{\ensuremath{E_{#1}(#2)}} % Evens
\newcommand{\oav}[2]{\ensuremath{O_{#1}(#2)}} % Odds
   \newcommand{\oi}{\sim}
   \newcommand{\we}{\equiv}
   \newcommand{\Swe}{\equiv_{\S{n}}}
   \newcommand{\ewe}{\equiv_{\E{n}}}
   \newcommand{\swe}{\ensuremath{\stackrel{s}{\equiv}_{\S{n}}}}
   \newcommand{\eswe}{\ensuremath{\stackrel{s}{\equiv}_{\E{n}}}}
\newcommand{\inv}{\ensuremath{\textsc{inv}}}
\newcommand{\sgn}{\ensuremath{\mathrm{sgn}}}
\newtheorem{theorem}{Theorem}
\newtheorem{conjecture}[theorem]{Conjecture}
\newtheorem{lemma}[theorem]{Lemma}
\newtheorem{algorithm}[theorem]{Algorithm}
\newif\ifgraph
\title{Pattern avoidance by even permutations}
\thanks{Both authors partially supported by NSA award H98230-09-1-0014.}
\author{Andrew M.\ Baxter}
\address{Department of Mathematics, Rutgers University}
\email{baxter@math.rutgers.edu}
\author{Aaron D.\ Jaggard}
\address{DIMACS, Rutgers University}
\curraddr{Department of Computer Science, Colgate University and DIMACS, Rutgers University}
\email{adj@dimacs.rutgers.edu}
\begin{document}

\begin{abstract}
We study questions of even-Wilf-equivalence, the analogue of Wilf-equivalence when attention is restricted to pattern avoidance by permutations in the alternating group.  Although some Wilf-equivalence results break when considering even-Wilf-equivalence analogues, we prove that other Wilf-equivalence results continue to hold in the even-Wilf-equivalence setting.  In particular, we prove that $t(t-1)\cdots 321$ and $(t-1)(t-2)\cdots 21t$ are even-shape-Wilf-equivalent for odd $t$, paralleling a result (which held for all $t$) of Backelin, West, and Xin for shape-Wilf-equivalence.  This allows us to classify the symmetric group $\S{4}$, and to partially classify $\S{5}$ and $\S{6}$, according to even-Wilf-equivalence.  As with transition to involution-Wilf-equivalence, some---but not all---of the classical Wilf-equivalence results are preserved when we make the transition to even-Wilf-equivalence.
\end{abstract}

\maketitle

\section{Introduction}

In this paper we focus on questions of Wilf-equivalence when we count only the \emph{even} permutations (i.e., the members of the alternating subgroup) that avoid a particular pattern.  In particular, we are interested in which classical Wilf-equivalence results have parallels when we instead consider even-Wilf-equivalence.  These investigations parallel comparisons between classical Wilf-equivalence and involution-Wilf-equivalence seen in other work (e.g.,~\cite{Simion1985,Jaggard2002,Dukes2009,jm11ac}).

Given a permutation $\sigma=\sigma_1\cdots\sigma_k\in\S{k}$, $\pi=\pi_1\cdots\pi_n\in\S{n}$ is said to \emph{contain the pattern} $\tau\in S_k$ if there is some sequence of indices $i_1<\cdots<i_k$ such that $\pi_{i_1}\cdots\pi_{i_k}$ is order isomorphic to $\tau_1\cdots\tau_k$.  If $\pi$ does not contain $\tau$, then $\pi$ is said to \emph{avoid the pattern} $\tau$.  We let $\Sav{n}{\sigma}$ denote the set of permutations avoiding $\sigma$ and let $\sav{n}{\sigma}:=\#\Sav{n}{\sigma}$.  Two permutations $\sigma$ and $\tau$ are said to be [classically] \emph{Wilf-equivalent} if, for every positive integer $n$, $\sav{n}{\sigma}=\sav{n}{\tau}$; we then write $\sigma\we\tau$. %(or $\sigma \Swe \tau$ if contrast is necessary).

A pair of indices $i<j$ forms an \emph{inversion} in permutation $\pi$ if $\pi_i > \pi_j$.  Let $\inv(\pi)$ denote the number of inversions in $\pi$, and let $\sgn(\pi) = (-1)^{\inv(\pi)}$ be the \emph{sign} of $\pi$.  If $\sgn(\pi)=1$ (i.e., $\inv(\pi)$ is even) we say that $\pi$ is \emph{even} and otherwise $\pi$ is \emph{odd}.  Let $\E{n}\subset\S{n}$ be the set of even permutations of length $n$ and $\O{n}\subset\S{n}$ be the set of odd permutations of length $n$.  Let $\Eav{n}{\sigma}=\Sav{n}{\sigma} \cap \E{n}$ be the set of even permutations avoiding $\sigma$ and let $\eav{n}{\sigma}:=\#\Eav{n}{\sigma}$, and similarly for $\O{n}(\sigma)$ and $\oav{n}{\sigma}$.  We say that two permutations $\sigma$ and $\tau$ are \emph{even-Wilf-equivalent} if $\eav{n}{\sigma}=\eav{n}{\tau}$ for all $n\geq 0$; we then write $\sigma\ewe\tau$.  When we need contrast with classical Wilf-equivalence, we denote classical Wilf-equivalence $\sigma\Swe\tau$.

Our main result, presented in Theorem~\ref{theJtFt}, is the even-shape-Wilf-equivalence of $J_t = t(t-1)\cdots 321$ and $F_t = (t-1)(t-2)\cdots 21t$ when $t$ is odd.  This parallels the analogous result for shape-Wilf-equivalence due to Backelin, West, and Xin~\cite{Backelin2007}, which held for all $t$.  As corollaries of our main result, we classify the permutations in $\S{4}$ according to $\ewe$ and give partial classifications of $\S{5}$ and $\S{6}$ according to $\ewe$; we also conjecture a number of other even-Wilf-equivalences that parallel known Wilf-equivalences.

\section{Equivalences via Symmetry}

In this section we present two useful lemmas connecting classical Wilf-equivalence to even-Wilf-equivalence.  First we exhibit a case that shows where it is is clear that $\sigma$ is \emph{not} even-Wilf-equivalent to $\tau$.

\begin{lemma}\label{lem:sgn}
 If $\sigma, \tau \in \S{k}$ but $\sgn(\sigma)\neq\sgn(\tau)$, then $\sigma\not\ewe\tau$.
\end{lemma}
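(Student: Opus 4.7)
The plan is to exhibit a single value of $n$ at which $\eav{n}{\sigma}$ and $\eav{n}{\tau}$ already disagree; the natural choice is $n=k$, the common length of the two patterns.

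First I would observe that a length-$k$ permutation $\pi\in\S{k}$ contains $\sigma\in\S{k}$ as a pattern if and only if $\pi=\sigma$, since the only length-$k$ subsequence of $\pi$ is $\pi$ itself. Hence $\Sav{k}{\sigma} = \S{k}\setminus\{\sigma\}$, and intersecting with the even permutations gives
\[
\Eav{k}{\sigma} \;=\; \E{k}\setminus\{\sigma\},
\]
where the removal is nontrivial precisely when $\sigma\in\E{k}$, i.e.\ when $\sgn(\sigma)=1$. Therefore
\[
\eav{k}{\sigma} \;=\; \frac{k!}{2} - [\sgn(\sigma)=1],
\]
using Iverson bracket notation, and the same formula holds for $\tau$.

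Now assume $\sgn(\sigma)\neq\sgn(\tau)$. Note this forces $k\geq 2$, since the unique permutation in $\S{1}$ is even. For $k\geq 2$ we have $|\E{k}|=k!/2$, and by the displayed formula one of $\eav{k}{\sigma},\eav{k}{\tau}$ equals $k!/2$ while the other equals $k!/2-1$. In particular they differ, so $\sigma\not\ewe\tau$.

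There is no real obstacle here; the argument is essentially a counting observation at length $n=k$, and the key point is simply that removing $\sigma$ from $\S{k}$ affects the count of even permutations exactly when $\sigma$ itself is even.
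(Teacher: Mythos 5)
Your proof is correct and follows the same approach as the paper's: evaluate at $n=k$, note that $\Eav{k}{\sigma}=\E{k}\setminus\{\sigma\}$ when $\sigma$ is even and $\Eav{k}{\sigma}=\E{k}$ when $\sigma$ is odd, so the two counts differ by exactly one. Your version just spells out the counting slightly more explicitly.
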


\begin{proof}
 If $\sigma$ is even and $\tau$ is odd, then $\Eav{k}{\sigma}= \E{n} \setminus \{\sigma\}$ while $\Eav{k}{\tau}=\E{k}$.  Hence $\eav{k}{\sigma} = \eav{k}{\tau}-1$.
\end{proof}

Next we consider the trivial symmetries induced by the symmetry of the square.  Recall the \emph{reverse} of $\pi=\pi_1\pi_2\cdots\pi_n$ is the horizontal reflection of $\pi$, denoted
 \begin{equation*}
   {\pi^{r}:=\pi_n\pi_{n-1}\cdots\pi_1}.
  \end{equation*}
 Similarly, the \emph{complement} of $\pi\in \S{n}$ is the vertical reflection
  \begin{equation*}
    \pi^c:={(n+1-\pi_1)(n+1-\pi_2)\cdots(n+1-\pi_n)}.
   \end{equation*}
 The inverse of $\pi$ is denoted as usual by $\pi^{-1}$.  The following lemma summarizes how these reflections affect the sign of $\pi$.

\begin{lemma}\label{lem:sgnsymm}
 The sign of a permutation $\pi\in S_n$ is affected by reflections in the following ways:
 \begin{enumerate}
   \item[(a.)] $\sgn(\pi) = \sgn(\pi^r)$ if and only if $n\equiv 0,1 \pmod{4}$.
   \item[(b.)] $\sgn(\pi) = \sgn(\pi^c)$ if and only if $n\equiv 0,1 \pmod{4}$.
   \item[(c.)] $\sgn(\pi) = \sgn(\pi^{-1})$
 \end{enumerate}
\end{lemma}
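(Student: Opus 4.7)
The plan is to reduce each of the three claims to a direct count of inversions, then read off sign parities.

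For parts (a) and (b), I would argue that both reverse and complement turn inversions into non-inversions and vice versa. Concretely, for any pair of positions $i<j$, the pair $(n+1-j,\,n+1-i)$ in $\pi^r$ is an inversion precisely when $(i,j)$ is not an inversion of $\pi$; and likewise, since $\pi^c_i = n+1-\pi_i$ simply flips the order relation on values, $(i,j)$ is an inversion of $\pi^c$ iff it is not one of $\pi$. Each argument yields
\[
 \inv(\pi)+\inv(\pi^r)=\binom{n}{2}, \qquad \inv(\pi)+\inv(\pi^c)=\binom{n}{2}.
\]
Hence $\sgn(\pi)\sgn(\pi^r)=\sgn(\pi)\sgn(\pi^c)=(-1)^{\binom{n}{2}}$. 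A short case check on $n \bmod 4$ then shows $\binom{n}{2}=n(n-1)/2$ is even exactly when $n\equiv 0$ or $1\pmod 4$, which gives both (a) and (b).

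For part (c), I would give a sign-preserving bijection between the inversion sets of $\pi$ and $\pi^{-1}$. Send an inversion $(i,j)$ of $\pi$ (so $i<j$ and $\pi_i>\pi_j$) to the pair $(\pi_j,\pi_i)$; since $\pi^{-1}(\pi_j)=j>i=\pi^{-1}(\pi_i)$, this is an inversion of $\pi^{-1}$. The map is clearly an involution between $\mathrm{Inv}(\pi)$ and $\mathrm{Inv}(\pi^{-1})$, so $\inv(\pi)=\inv(\pi^{-1})$ and hence $\sgn(\pi)=\sgn(\pi^{-1})$.

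I do not anticipate a real obstacle here; the only subtle point is being careful with indices in the reverse case to make sure the bijection on position-pairs is set up correctly, and remembering to check both directions of the "if and only if" in (a) and (b) (which is automatic once $\sgn(\pi)\sgn(\pi^r)=(-1)^{\binom{n}{2}}$ is established, since this identity holds for \emph{every} $\pi$).
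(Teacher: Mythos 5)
Your proof is correct and follows essentially the same route as the paper: both establish $\inv(\pi^r)=\inv(\pi^c)=\binom{n}{2}-\inv(\pi)$ and reduce (a) and (b) to the parity of $\binom{n}{2}$, and both use $\inv(\pi)=\inv(\pi^{-1})$ for (c). Your version is slightly more explicit about the position-pair bijection for the reverse and about the inversion-set bijection proving $\inv(\pi)=\inv(\pi^{-1})$, which the paper simply cites as a known fact.
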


\begin{proof}
  For each pair of indices $i<j$, $\pi_i > \pi_j$ if and only if $(\pi^r)_i < (\pi^r)_j$.  That is, the reversal map swaps the sites of inversions and non-inversions.  Therefore $\inv(\pi^r) = \binom{n}{2} - \inv(\pi)$.  Since $\binom{n}{2}$ is even if and only if $n\equiv 0,1 \pmod{4}$, part (a) is proven.  Part (b) is proven similarly since it is also the case that $\inv(\pi^c) = \binom{n}{2} - \inv(\pi)$.  Part (c) follows from the fact that for any permutation, $\inv(\pi) = \inv(\pi^{-1})$.
\end{proof}

In the classical case, $\sigma \Swe \sigma^{r}$, $\sigma\Swe\sigma^{c}$, and $\sigma\Swe\sigma^{-1}$.  Parts (a) and (b) of the lemma above, however, show that even-Wilf-equivalence for $\sigma$ and $\sigma^{r}$ is not guaranteed, and similarly for $\sigma^{c}$.  For example $123\not\ewe 321$, since $\eav{3}{123}=2$ and $\eav{3}{321}=3$.  Part (c) confirms, however, that we still have $\sigma \ewe \sigma^{-1}$.

The next lemma demonstrates that while we lose the equivalences from reversal and complement, we may use symmetric versions of any even-Wilf-equivalences discovered.

\begin{lemma}\label{lem2}
 If $\sigma \ewe \tau$ and $\sigma \Swe \tau$, then $\sigma^r \ewe \tau^r$ and $\sigma^c \ewe \tau^c$.
\end{lemma}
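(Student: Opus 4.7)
The plan is to use the fact that the reversal map $\pi\mapsto\pi^r$ is a bijection from $\Sav{n}{\sigma}$ onto $\Sav{n}{\sigma^r}$ (and similarly for complement), but which may or may not preserve parity depending on $n\bmod 4$. The hypothesis that $\sigma\Swe\tau$ \emph{and} $\sigma\ewe\tau$ together will let us handle the ``bad'' residues where reversal flips sign.

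First I would fix $n$ and split on whether $n\equiv 0,1\pmod 4$ or $n\equiv 2,3\pmod 4$, appealing to Lemma~\ref{lem:sgnsymm}(a). In the first case the reversal bijection preserves parity, so it restricts to a bijection $\Eav{n}{\sigma}\to\Eav{n}{\sigma^r}$, giving $\eav{n}{\sigma^r}=\eav{n}{\sigma}$ and similarly $\eav{n}{\tau^r}=\eav{n}{\tau}$. The hypothesis $\sigma\ewe\tau$ then finishes this case immediately.

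The main (and only) subtle point is the second case: when $n\equiv 2,3\pmod 4$, reversal swaps even and odd, so the bijection restricts to $\Eav{n}{\sigma}\to\Oav{n}{\sigma^r}$, i.e.\ $\eav{n}{\sigma^r}=\oav{n}{\sigma}$. Thus I need to show $\oav{n}{\sigma}=\oav{n}{\tau}$. This is exactly where both hypotheses come in: writing $\oav{n}{\rho}=\sav{n}{\rho}-\eav{n}{\rho}$, the equality $\sigma\Swe\tau$ equates the total avoider counts, and $\sigma\ewe\tau$ equates the even avoider counts, so the odd avoider counts agree as well. Hence $\eav{n}{\sigma^r}=\oav{n}{\sigma}=\oav{n}{\tau}=\eav{n}{\tau^r}$.

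Combining the two cases gives $\sigma^r\ewe\tau^r$. The proof of $\sigma^c\ewe\tau^c$ is identical after replacing reversal by complement and invoking Lemma~\ref{lem:sgnsymm}(b), since the complement map is also a bijection $\Sav{n}{\sigma}\to\Sav{n}{\sigma^c}$ whose effect on sign depends on $n\bmod 4$ in the same way. The only real content is the parity bookkeeping in the second case, and there is no substantial obstacle beyond recognizing that $\sigma\Swe\tau$ is needed precisely to pass from equality of even counts to equality of odd counts.
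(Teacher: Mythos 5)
Your proof is correct and follows essentially the same route as the paper's: split on $n \bmod 4$ using Lemma~\ref{lem:sgnsymm}, handle the residues $2,3$ by observing that $\oav{n}{\rho}=\sav{n}{\rho}-\eav{n}{\rho}$ so that the two hypotheses together force $\oav{n}{\sigma}=\oav{n}{\tau}$, and treat complement analogously. No substantive differences.
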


\begin{proof}
 We will prove $\sigma^r \ewe \tau^r$.  The proof for $\sigma^c \ewe \tau^c$ is analogous.  First observe that if $\sav{n}{\sigma}=\sav{n}{\tau}$ and $\eav{n}{\sigma}=\eav{n}{\tau}$, then $\oav{n}{\sigma}=\oav{n}{\tau}$.  We continue by cases.  If $n\equiv 0$ or $1\pmod{4}$, then
 \begin{equation*}
   \eav{n}{\sigma^r}=\eav{n}{\sigma}=\eav{n}{\tau}=\eav{n}{\tau^r},
 \end{equation*}
 where the first and third equalities follow from Lemma $\ref{lem:sgnsymm}$ and the second equality by our assumptions.  If $n\equiv 2$ or $3\pmod{4}$, then we see
 \begin{equation*}
   \eav{n}{\sigma^r}=\oav{n}{\sigma}=\oav{n}{\tau}=\eav{n}{\tau^r},
 \end{equation*}
 where again the first and third equalities follow from Lemma \ref{lem:sgnsymm} and the second equality by the observation above.
\end{proof}

It is worth stating the following lemma regarding the trivial equivalence classes for even-Wilf-equivalences.  Its proof is similar to those above and is left to the reader.

\begin{lemma}\label{lem3}
 For a pattern $\sigma$, we have the following trivial equivalences:
  \begin{itemize}
   \item $\sigma \ewe \sigma^{-1} \ewe \sigma^{rc} \ewe \left(\sigma^{-1}\right)^{rc}$
   \item $\sigma^r \ewe \sigma^{c} \ewe \left(\sigma^{-1}\right)^r \ewe \left(\sigma^{-1}\right)^c$
  \end{itemize}
\end{lemma}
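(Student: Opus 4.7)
The plan is to identify the symmetries of the square that always preserve sign, and then combine those with the fact (proven earlier) that such symmetries also respect pattern containment. The only symmetries of Lemma~\ref{lem:sgnsymm} that are sign-preserving for every $n$ are inversion ($\pi\mapsto \pi^{-1}$, part (c)) and reverse-complement ($\pi\mapsto \pi^{rc}$). Indeed, applying part (a) twice (or by direct computation) gives
\[
 \inv(\pi^{rc}) \;=\; \tbinom{n}{2}-\inv(\pi^c) \;=\; \tbinom{n}{2}-\bigl(\tbinom{n}{2}-\inv(\pi)\bigr) \;=\; \inv(\pi),
\]
so $\sgn(\pi^{rc})=\sgn(\pi)$ for \emph{every} $n$, even though $r$ and $c$ individually do not always preserve sign.

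Next I would recall (or observe in one line) that for any pattern $\sigma$ and any permutation $\pi$, the bijections $\pi\mapsto \pi^{-1}$ and $\pi\mapsto \pi^{rc}$ convert containment of $\sigma$ into containment of $\sigma^{-1}$ and $\sigma^{rc}$, respectively. Combined with the sign-preservation above, each map restricts to a bijection $\Eav{n}{\sigma}\to\Eav{n}{\sigma^{-1}}$ and $\Eav{n}{\sigma}\to\Eav{n}{\sigma^{rc}}$. This immediately yields $\sigma\ewe \sigma^{-1}$ and $\sigma\ewe \sigma^{rc}$, and by iterating we obtain the first chain
\[
 \sigma \;\ewe\; \sigma^{-1} \;\ewe\; (\sigma^{-1})^{rc} \quad\text{and}\quad \sigma \;\ewe\; \sigma^{rc},
\]
which closes up to the four-element equivalence class claimed.

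For the second chain I would apply the exact same two sign-preserving bijections to $\sigma^r$ in place of $\sigma$, obtaining $\sigma^r \ewe (\sigma^r)^{-1}$, $\sigma^r\ewe (\sigma^r)^{rc}$, and so on. The remaining task is then purely a bookkeeping identification of these permutations with $\sigma^c$, $(\sigma^{-1})^r$, $(\sigma^{-1})^c$. Using that $r$ and $c$ commute and are involutions, $(\sigma^r)^{rc}=\sigma^{rrc}=\sigma^c$; a direct coordinatewise check gives $(\sigma^r)^{-1}=(\sigma^{-1})^c$; and combining these, $((\sigma^r)^{-1})^{rc}=((\sigma^{-1})^c)^{rc}=(\sigma^{-1})^{crc}=(\sigma^{-1})^r$. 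Substituting these back into the chain obtained from the previous paragraph produces exactly $\sigma^r\ewe \sigma^c\ewe (\sigma^{-1})^r\ewe (\sigma^{-1})^c$.

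There is no real obstacle here: both chains come from just two sign-preserving symmetries ($\,\cdot\,^{-1}$ and $\,\cdot\,^{rc}$), and the only thing to be careful about is the identification of composed operations, which is handled by the commutativity of $r$ and $c$ and the formula $(\sigma^r)^{-1}=(\sigma^{-1})^c$.
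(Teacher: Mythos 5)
Your proof is correct and follows exactly the route the paper intends (the paper itself leaves this proof to the reader as ``similar to those above''): identify $\pi\mapsto\pi^{-1}$ and $\pi\mapsto\pi^{rc}$ as the sign-preserving trivial symmetries via Lemma~\ref{lem:sgnsymm}, note they carry $\Eav{n}{\sigma}$ bijectively onto $\Eav{n}{\sigma^{-1}}$ and $\Eav{n}{\sigma^{rc}}$, and then do the bookkeeping with $(\sigma^r)^{-1}=(\sigma^{-1})^c$ and the commuting involutions $r,c$ to get both chains. The only nitpick is the phrase ``applying part (a) twice'': your displayed computation actually uses part (a) once and part (b) once, but the computation itself is correct.
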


\section{Short Patterns}

In this section we turn to the question of classifying patterns of a given length according to even-Wilf-equivalence.

Lemma \ref{lem:sgn} immediately implies $12\not\ewe 21$, which is the classification of $\S{2}$.  %completing the classification of $\S{2}$.

Moving on to patterns of length three, we turn to Simion and Schmidt's observations in \cite{Simion1985}.  Their enumerations of $\eav{n}{\sigma}-\oav{n}{\sigma}$ for each $\sigma\in \S{3}$ imply the following
\begin{theorem}[Simion and Schmidt, 1985]
 There are two distinct even-Wilf-equivalence classes for patterns of length 3:
 \begin{itemize}
   \item  $123\ewe312\ewe231$
   \item  $321\ewe213\ewe132$
 \end{itemize}
\end{theorem}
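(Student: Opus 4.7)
The plan is to leverage two ingredients: the classical Wilf-equivalence $\sav{n}{\sigma} = C_n$ (the $n$-th Catalan number) that holds uniformly over $\S{3}$, and Simion and Schmidt's sign-refined enumerations of $\Sav{n}{\sigma}$ from \cite{Simion1985}. Knowing both the sum $\eav{n}{\sigma}+\oav{n}{\sigma} = C_n$ and the signed difference $\eav{n}{\sigma}-\oav{n}{\sigma} = \sum_{\pi \in \Sav{n}{\sigma}} \sgn(\pi)$ determines $\eav{n}{\sigma}$, which is exactly what the classification requires.

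First I would apply Lemma \ref{lem:sgn} to observe that any even-Wilf-equivalence class in $\S{3}$ is contained in either the set of even patterns $\{123, 231, 312\}$ or the set of odd patterns $\{132, 213, 321\}$. So the partition claimed in the theorem is the coarsest one consistent with sign, and it suffices to show each of these triples is a single class. Lemma \ref{lem3} then yields part of this for free: since $231^{-1}=312$ we get $231 \ewe 312$, and since $132^{rc}=213$ we get $132 \ewe 213$. Only the two bridges $123 \ewe 231$ and $321 \ewe 132$ remain.

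For the remaining equivalences I would read off Simion and Schmidt's signed enumerations of $\Sav{n}{\sigma}$ for each $\sigma\in\S{3}$. Their formulas produce one common value of $\eav{n}{\sigma}-\oav{n}{\sigma}$ across the three even patterns and a second common value across the three odd patterns. Combined with $\eav{n}{\sigma}+\oav{n}{\sigma}=C_n$, this forces $\eav{n}{\sigma}$ to be constant on each triple, establishing $123\ewe 231\ewe 312$ and $321\ewe 213\ewe 132$.

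The substantive step is the sign-refined enumeration itself, which is the main obstacle if one were to reconstruct the proof from scratch. Simion and Schmidt obtain these signed totals via explicit bijections and sign-reversing involutions on the Catalan-sized avoidance classes; a self-contained alternative would be to exhibit sign-preserving bijections $\Sav{n}{123}\to\Sav{n}{231}$ and $\Sav{n}{321}\to\Sav{n}{132}$, for instance by composing a Simion--Schmidt-style bijection with a reverse or complement and carefully tracking $\binom{n}{2}\bmod 2$ via Lemma \ref{lem:sgnsymm}. Either route closes the argument.
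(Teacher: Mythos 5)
Your proposal is correct and follows essentially the same route as the paper, which simply cites Simion and Schmidt's enumerations of $\eav{n}{\sigma}-\oav{n}{\sigma}$ for $\sigma\in\S{3}$ and lets these, together with the common Catalan count $\sav{n}{\sigma}=C_n$, determine $\eav{n}{\sigma}$ on each sign class (with Lemma~\ref{lem:sgn} separating the two classes). Your extra reduction via the trivial symmetries of Lemma~\ref{lem3} is a harmless refinement that does not change the substance of the argument.
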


This suggests that if $\sigma\Swe\tau$ and $\sgn(\sigma)=\sgn(\tau)$, then $\sigma\ewe\tau$.  This is not the case, however, as demonstrated by $1234\not\ewe4321$: $\eav{6}{1234}=258$, while $\eav{6}{4321}=255$.

To classify patterns of length 4 or more, we use tools developed in the next section.

\section{An Infinite Class of Non-trivial Equivalences}

In this section we discuss an extension of the celebrated ``prefix reversal'' result for classical Wilf-equivalence, as proven by Backelin, West, and Xin in \cite{Backelin2007}.  We follow and adapt their notation, aside from a change in convention: we reflect everything vertically.  Backelin et al. state their results in terms of (permutation) matrices avoiding other (permutation) matrices.  Hence the permutation 132, for example, is written as
\begin{equation*}
\begin{bmatrix}
1 & 0 & 0 \\
0 & 0 & 1 \\
0 & 1 & 0 \\
\end{bmatrix}
\end{equation*}
They then proceed to consider pattern avoidance in non-attacking rook placements in Young diagrams.  These rook placements correspond to permutation matrices with some of the southeast cells of the matrix absent.

We choose to illustrate our permutations as graphs of functions, hence our graph of 132 looks like Figure \ref{fig132}.

\ifgraph
\begin{figure}[hbt]
	\centering
		\includegraphics[width=.3\textwidth]{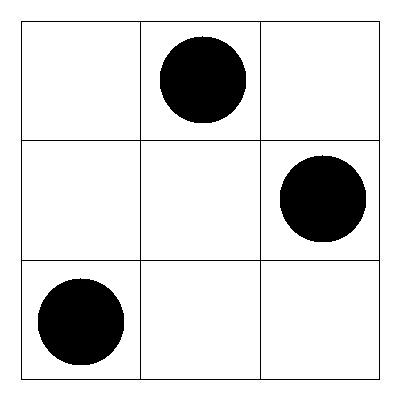}
	\caption{The graph of the permutation $132$}
	\label{fig132}
\end{figure}
\fi

As a result of this new convention, we orient our Young diagrams $\lambda=\lambda_1\geq\lambda_2\geq\cdots\geq\lambda_n$ such that the largest part $\lambda_1$ forms the \emph{bottom} row of cells (boxes), then $\lambda_2$ cells lie above this bottom layer, and so on as per the French custom.  Cells of $\lambda$ are indexed from the lower-left corner by rows and columns, so $(r,c)$ is the cell in the $r^{th}$ row (increasing from the bottom) and $c^{th}$ column (increasing from the left).  Hence $(r',c')$ is \emph{above} $(r,c)$ if $r'>r$ and \emph{to the right} if $c'>c$.

A transversal of $\lambda=(\lambda_1,\ldots,\lambda_n)$ is a permutation $\pi\in\S{n}$ such that each point in the graph of $\pi$ lies inside some cell of $\lambda$ (i.e., $\pi^{-1}_i \leq \lambda_i$ for $1\leq i\leq n$).  Figure \ref{figtrans} illustrates that $\pi=45321$ is a transversal of $\lambda=(5,5,3,2,2)$.  Let $\S{\lambda}$ denote all transversals of $\lambda$.

\ifgraph
\begin{figure}
	\centering
		\includegraphics[width=.3\textwidth]{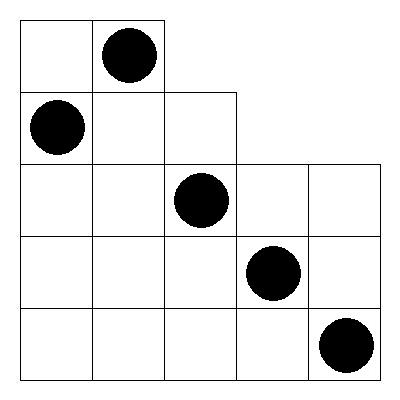}
	\caption{$\pi=45321$ is a transversal of $\lambda=(5,5,5,3,2)$.}
	\label{figtrans}
\end{figure}
\fi

Pattern containment is stricter for transversals than it is for permutations.  A transversal $\pi\in\S{\lambda}$ contains $\sigma\in \S{k}$ if there exists a subsequence $i_1<i_2<\cdots<i_k$ such that $\pi_{i_1}\pi_{i_2}\cdots\pi_{i_k} \oi \sigma$ and the cell $(\max \{\pi_{i_1},\pi_{i_2},\cdots,\pi_{i_k}\}, i_k)$ lies in $\lambda$.  In other words, the rows and columns of $\lambda$ containing $\pi_{i_1}\pi_{i_2}\cdots\pi_{i_k}$ must form a full $k\times k$ square.  In Figure \ref{figtrans} we see the transversal $45321$ in $(5,5,5,3,2)$ contains $321$ in the last three entries.  Further, the transversal $45321$ in $(5,5,5,3,2)$ avoids $231$ even though the \emph{permutation} $45321$ does not.  We let $\Sav{\lambda}{\sigma}$ denote the set of all transversals of $\lambda$ which do not contain $\sigma$, and $\sav{\lambda}{\sigma} := \#\Sav{\lambda}{\sigma}$.  Two patterns $\sigma$ and $\tau$ are called \emph{shape-Wilf-equivalent} if $\sav{\lambda}{\sigma}=\sav{\lambda}{\tau}$ for all shapes $\lambda$; we denote this $\sigma\swe\tau$.  Clearly shape-Wilf-equivalence implies Wilf-equivalence, since Wilf-equivalence considers only the shapes $\lambda$ which are $n\times n$ squares.

We adapt these concepts for even permutations as follows.  A transversal $\pi\in\S{\lambda}$ is \emph{even} if the underlying permutation $\pi$ is even.  Note that the presence/absence of an inversion is independent of $\lambda$, that is, an inversion is \emph{not necessarily} a copy of a $21$ pattern in the sense of transversals.  Let $\E{\lambda}$ be the even transversals in $\S{\lambda}$, $\Eav{\lambda}{\sigma}$ be the even transversals in $\lambda$ avoiding $\sigma$, and $\eav{\lambda}{\sigma}:=\#\Eav{\lambda}{\sigma}$.  We may do the same for odd transversals, using $\O{\lambda}$, $\Oav{\lambda}{\sigma}$, and $\oav{\lambda}{\sigma}$.  If $\eav{\lambda}{\sigma}=\eav{\lambda}{\tau}$, then we say $\sigma$ and $\tau$ are \emph{even-shape-Wilf-equivalent} and we write $\sigma\eswe\tau$.

Recall the direct sum of two permutations, $\alpha\in\S{k}$ and $\beta\in\S{\ell}$, is the length-$(k+\ell)$ permutation $\alpha_1\alpha_2\cdots\alpha_k(\beta_1+k+1)(\beta_2+k+1)\cdots(\beta_{\ell}+k+1)$.  This is most easily seen as placing $\beta$ above and to the right of $\alpha$. See Figure \ref{figsum} where we illustrate $312\oplus2413=3125746$.

\ifgraph
\begin{figure}[htb]
	\centering
		\includegraphics[height = 2 in]{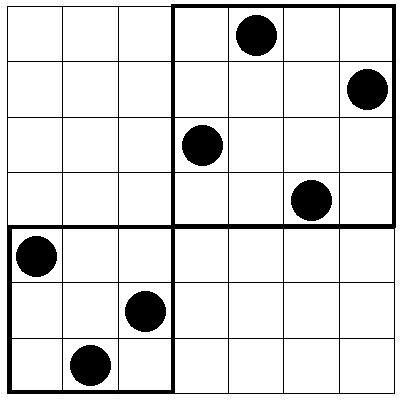}
	\caption{The direct sum $312\oplus2413=3125746$}
	\label{figsum}
\end{figure}
\fi

We now re-state Proposition 2.3 from \cite{Backelin2007} as a lemma.

\begin{lemma}[Backelin, West, and Xin, \cite{Backelin2007}] \label{lemswe}
For patterns $\alpha$ and $\beta$, $\alpha\swe\beta$ implies $\alpha\oplus\sigma \swe \beta\oplus\sigma$.
\end{lemma}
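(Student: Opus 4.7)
The plan is to construct, for each shape $\lambda$, a bijection $\Sav{\lambda}{\alpha\oplus\sigma}\to\Sav{\lambda}{\beta\oplus\sigma}$ by applying the shape-Wilf-equivalence bijections $f_\mu\colon\Sav{\mu}{\alpha}\to\Sav{\mu}{\beta}$ provided by the hypothesis $\alpha\swe\beta$ on a canonically chosen sub-diagram of $\lambda$.

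The first step is a structural characterization. Given $\pi\in\S{\lambda}$, call a set of columns $C=\{c_1<\cdots<c_\ell\}$ a \emph{$\sigma$-site} of $\pi$ if $\pi_{c_1}\cdots\pi_{c_\ell}$ is order-isomorphic to $\sigma$ and the cell $(\max_i\pi_{c_i},c_\ell)$ lies in $\lambda$. Writing $r_{\min}$ for the smallest of these values, the cells of $\lambda$ in rows $1,\ldots,r_{\min}-1$ and columns $1,\ldots,c_1-1$ form a Young sub-diagram $\mu_C\subseteq\lambda$. The key observation is that $\pi$ contains $\alpha\oplus\sigma$ precisely when some $\sigma$-site $C$ of $\pi$ has the property that the sub-transversal of $\pi$ inside $\mu_C$ contains $\alpha$ (as a transversal pattern in $\mu_C$); the Young property of $\lambda$ makes the corner-cell condition for $\alpha$ automatic once the analogous condition holds for $\sigma$.

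Next, for each $\pi$ that contains $\sigma$, I would designate a canonical $\sigma$-site $C(\pi)$ — for instance, the lexicographically largest one reading columns from right to left. This partitions $\S{\lambda}$ into fibers, where each fiber is determined by the canonical site $C$ together with the positions of $\pi$ outside $\mu_C$; transversals avoiding $\sigma$ form a separate fiber on which we take the identity, since they automatically avoid both $\alpha\oplus\sigma$ and $\beta\oplus\sigma$. Within a non-trivial fiber, the only varying datum is the sub-transversal of $\pi$ inside $\mu_{C(\pi)}$, and by the characterization above, $\pi$ avoids $\alpha\oplus\sigma$ exactly when this sub-transversal avoids $\alpha$. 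Applying $f_{\mu_{C(\pi)}}$ to that sub-transversal and gluing the rest of $\pi$ back unchanged produces, fiber by fiber, the claimed bijection.

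The main obstacle — and the combinatorial heart of the argument — is verifying that $C(\pi)$ is preserved under this modification: after replacing the sub-transversal of $\pi$ in $\mu_{C(\pi)}$ by any other sub-transversal of $\mu_{C(\pi)}$, the canonical $\sigma$-site of the new transversal must still equal $C(\pi)$. The delicate point is that a modification confined to $\mu_{C(\pi)}$ can in principle create brand-new $\sigma$-sites in the modified transversal by mixing changed interior points with unchanged exterior points, and such a new site might out-rank $C(\pi)$ in the canonical ordering. Ruling this out requires a careful choice of the canonicalization rule — this is the clever combinatorial core of Backelin, West, and Xin's proof — and is where I would expect to spend the bulk of the effort; once this invariance is secured, the rest of the argument is routine fiberwise bookkeeping.
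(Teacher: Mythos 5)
Your plan diverges from the paper's (i.e., Backelin--West--Xin's) argument at the crucial point, and the divergence creates gaps that are not merely technical. The central problem is the reduction to a single canonical $\sigma$-site $C(\pi)$. Containment of $\alpha\oplus\sigma$ means that \emph{some} $\sigma$-site $C$ has an $\alpha$-occurrence in its lower-left region $\mu_C$, and distinct $\sigma$-sites have incomparable regions (one site may sit low and far right, another high and far left, so neither $\mu_C$ contains the other). Consequently, controlling $\alpha$- versus $\beta$-containment only inside $\mu_{C(\pi)}$ is not enough: after you apply $f_{\mu_{C(\pi)}}$, the image could acquire a $\beta$-occurrence sitting below-left of a \emph{different} $\sigma$-site, partly inside and partly outside $\mu_{C(\pi)}$, so the map need not land in $\Sav{\lambda}{\beta\oplus\sigma}$ at all. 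The paper's proof avoids this by working not with one site but with the union over all sites: a cell is colored white precisely when some $\sigma$-occurrence lies entirely above and to its right (Step~1), and the $\alpha$/$\beta$-avoidance condition is then equivalent to avoidance by the sub-transversal of the whole white region. The stability you worry about is then not a matter of a ``careful canonicalization rule'' at all --- it follows because any white cell has a $\sigma$-witness consisting entirely of gray (hence unmoved) points, so the coloring depends only on the part of $\pi$ that the map fixes. Attributing a canonical-site argument to Backelin, West, and Xin is inaccurate; their construction is designed precisely to sidestep the invariance problem you correctly identify but leave unresolved.

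A second, independent gap: the points of $\pi$ lying in $\mu_{C(\pi)}$ (or in the white region) do not form a transversal of that shape, since columns to the left of $c_1$ may carry values at or above $r_{\min}$ and vice versa, leaving empty rows and columns. You therefore cannot feed this configuration to $f_{\mu_{C(\pi)}}$, whose domain consists of honest transversals of a Young diagram. The paper's Step~2 exists exactly to repair this: every row and column through a point lying in a gray cell is itself grayed out, after which the remaining white cells $\bar{\lambda}$ form a Young diagram of which the remaining points $\bar{\pi}$ are a genuine transversal. Your proposal as written omits both the union construction and this trimming step, and the invariance of $C(\pi)$ that you defer is, for a generic canonicalization rule, simply false; so the argument does not go through without being restructured along the lines of the paper's two-step coloring.
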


We summarize the proof here, as it will be useful for the following lemma.

\begin{proof}
  For any shape $\lambda$, let $f_{\lambda}:\Sav{\lambda}{\alpha}\to\Sav{\lambda}{\beta}$ be a %the
  bijection implied by the hypothesis.  Now fix $\lambda$ and let $\pi\in\Sav{\lambda}{\alpha\oplus\sigma}$.  We will color the cells of $\lambda$ either white or gray by a two-step procedure, then transform within the white cells while leaving the gray cells fixed.  In this way we create a bijection $\Sav{\lambda}{\alpha\oplus\sigma}\to \Sav{\lambda}{\beta\oplus\sigma}$.  We illustrate these steps in Figures \ref{fig:swe-fig2} -- \ref{fig:swe-fig4}.
  \begin{enumerate}
   \item[Step 1.] Color cell $(r,c)$ white if the part of $\pi$ lying in the subboard above and to the right of it contains $\sigma$ (as a transversal).  Otherwise color $(r,c)$ gray.
   \item[Step 2.] For each point in the graph of $\pi$ which lies in a gray cell, color gray the remaining cells in its row and column.
  \end{enumerate}

\ifgraph
\begin{figure}[htb]
	\centering
		\includegraphics[height=1.5 in]{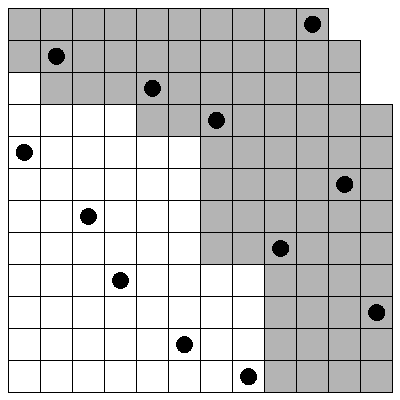}
	\caption{Executing step 1 for given $\pi$, $\lambda$ for $\sigma=12$}
	\label{fig:swe-fig2}
\end{figure}

\begin{figure}[htb]
	\centering
		\includegraphics[height=1.5 in]{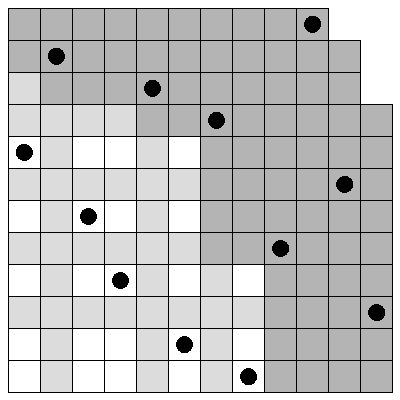}
	\caption{Executing step 2 for given $\pi$, $\lambda$ for $\sigma=12$}
	\label{fig:swe-fig3}
\end{figure}

\begin{figure}[htb]
	\centering
		\includegraphics[height=1.1 in]{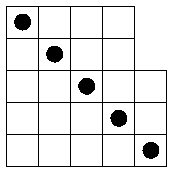}
	\caption{The resulting $\bar{\pi}$ and $\bar{\lambda}$}
	\label{fig:swe-fig4}
\end{figure}
\fi

  Denote the white cells by $\bar{\lambda}$ and the subtransversal of $\pi$ lying in $\bar{\lambda}$ by $\bar{\pi}$.  By step 2, $\bar{\lambda}$ is itself a Young diagram, and further $\bar{\pi}$ is a transversal of $\bar{\lambda}$.  Further, since $\pi$ avoids $\alpha\oplus\sigma$, step 1 implies $\bar{\pi}$ avoids $\alpha$.  We apply $f_{\bar{\lambda}}$ to $\bar{\pi}$  within the white cells, and so $f_{\bar{\lambda}}(\bar{\pi})$ avoids $\beta$.  Restoring the gray portions of $\lambda$ and $\pi$, we finish with a transversal avoiding $\beta\oplus\sigma$.
  The inverse map is identical, except that $f_{\bar{\lambda}}$ is replaced by its inverse $f_{\bar{\lambda}}^{-1}$.
\end{proof}

We are now ready to state the even-shape-Wilf-equivalence analogue.

\begin{lemma}\label{lemeswe}
For patterns $\alpha$ and $\beta$, if $\alpha\eswe\beta$ and $\alpha\swe\beta$ then $\alpha\oplus\sigma \eswe \beta\oplus\sigma$.
\end{lemma}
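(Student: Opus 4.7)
The plan is to refine the BWX coloring argument from Lemma~\ref{lemswe} with sign bookkeeping.  Given $\pi\in\Eav{\lambda}{\alpha\oplus\sigma}$, I would run Steps~1 and~2 of the BWX coloring to produce the white Young subdiagram $\bar{\lambda}$, the white subtransversal $\bar{\pi}\in\Sav{\bar{\lambda}}{\alpha}$, and the gray subtransversal $\pi_g$ (the points of $\pi$ outside $\bar{\lambda}$).  I would then apply a parity-preserving bijection $f_{\bar{\lambda}}\colon\Sav{\bar{\lambda}}{\alpha}\to\Sav{\bar{\lambda}}{\beta}$ inside the white cells and verify that the full transversal's sign is preserved.

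A parity-preserving $f_{\bar{\lambda}}$ is readily available.  Since $\alpha\swe\beta$ gives $\sav{\bar{\lambda}}{\alpha}=\sav{\bar{\lambda}}{\beta}$ and $\alpha\eswe\beta$ gives $\eav{\bar{\lambda}}{\alpha}=\eav{\bar{\lambda}}{\beta}$, subtraction yields $\oav{\bar{\lambda}}{\alpha}=\oav{\bar{\lambda}}{\beta}$.  Choose any bijections $\Eav{\bar{\lambda}}{\alpha}\to\Eav{\bar{\lambda}}{\beta}$ and $\Oav{\bar{\lambda}}{\alpha}\to\Oav{\bar{\lambda}}{\beta}$ and glue them to obtain $f_{\bar{\lambda}}$; by construction, $\bar{\pi}':=f_{\bar{\lambda}}(\bar{\pi})$ has the same sign as $\bar{\pi}$.

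The main obstacle is showing that preserving the sign of $\bar{\pi}$ on the white side is enough to preserve the sign of the whole transversal.  Let $\pi'$ denote the transversal obtained from $\pi$ by replacing $\bar{\pi}$ with $\bar{\pi}'$ inside $\bar{\lambda}$ while leaving $\pi_g$ fixed.  Decompose $\inv(\pi)=\inv(\bar{\pi})+\inv(\pi_g)+I_{\mathrm{mix}}(\pi)$, where $I_{\mathrm{mix}}(\pi)$ is the number of inversions involving one white point and one gray point.  Since the gray part is untouched $\inv(\pi_g)$ is invariant, and by construction $\inv(\bar{\pi})\equiv\inv(\bar{\pi}')\pmod{2}$.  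The remaining task is to prove that $I_{\mathrm{mix}}(\pi)\equiv I_{\mathrm{mix}}(\pi')\pmod{2}$.

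To see this, fix a gray point $(c_g,r_g)$, let $C_W$ and $R_W$ denote the white columns and rows, and let $a,b,c,d$ count the white points of $\bar{\pi}$ in the NW, NE, SW, and SE quadrants around $(c_g,r_g)$, respectively.  The contribution of $(c_g,r_g)$ to $I_{\mathrm{mix}}(\pi)$ is exactly $a+d$.  Since $\bar{\pi}$ restricts to a bijection $C_W\to R_W$, we have $a+b=|\{r\in R_W:r>r_g\}|$ and $b+d=|\{c\in C_W:c>c_g\}|$; eliminating $b$ gives $a+d\equiv|\{r\in R_W:r>r_g\}|+|\{c\in C_W:c>c_g\}|\pmod{2}$, which depends only on $\bar{\lambda}$ and $\pi_g$.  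Summing over gray points yields $I_{\mathrm{mix}}(\pi)\equiv I_{\mathrm{mix}}(\pi')\pmod{2}$, so $\pi'$ is even, and the construction restricts to a bijection $\Eav{\lambda}{\alpha\oplus\sigma}\to\Eav{\lambda}{\beta\oplus\sigma}$, establishing $\alpha\oplus\sigma\eswe\beta\oplus\sigma$.
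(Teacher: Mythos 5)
Your proposal is correct and follows essentially the same route as the paper: run the Backelin--West--Xin coloring, apply a parity-preserving bijection (obtained by gluing separate even and odd bijections, which exist by the two hypotheses) inside the white subboard, and check that the overall sign is unchanged. Your explicit mixed-inversion count is just a more detailed verification of the step the paper dispatches in one line by observing that replacing $\bar{\pi}$ with an equal-sign transversal of $\bar{\lambda}$ amounts to multiplying $\pi$ by an even permutation.
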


\begin{proof}
  We will adapt notation from Lemma \ref{lemswe} above.  Let $g_{\lambda}:\Eav{\lambda}{\alpha}\to\Eav{\lambda}{\beta}$ be a %the
  bijection implied by the hypothesis.  By reasoning similar to that in Lemma \ref{lem2}, we see that we may also construct a bijection for odd transversals $h_{\lambda}:\Oav{\lambda}{\alpha}\to\Oav{\lambda}{\beta}$.

  The map $\Eav{\lambda}{\alpha\oplus\sigma} \to \Eav{\lambda}{\beta\oplus\sigma}$ is constructed in the same way as for Lemma \ref{lemswe}.  Color cells of $\lambda$ white or gray by the same rules, and isolate $\bar{\lambda}$ and $\bar{\pi}$.  Now $\bar{\pi}$ is either even or odd, so we apply the appropriate map $g_{\bar{\lambda}}$ or $h_{\bar{\lambda}}$.  Observe that these maps preserve sign and so correspond to multiplying the original $\pi$ by some even permutation.  Hence the image of the transversal $\pi$ is also even and we have our bijection.
\end{proof}

Backelin et al.\! also prove $J_t \swe I_t$, where $J_t$ is the decreasing permutation $t(t-1)\cdots 21$ and $I_t$ is the increasing permutation $12\cdots t$.  By Lemma \ref{lemswe} above, this implies the well-known ``prefix reversal'' maneuver for Wilf-equivalence, namely $12\cdots k\oplus \sigma \Swe k(k-1)\cdots 1\oplus\sigma$.  They prove\footnote{They actually provide two proofs of $J_t \swe I_t$.  Here we discuss only their first proof.} $J_t\swe I_t$ via their Proposition 3.1, that $J_t \swe F_t$ for all $t>0$, where $F_t = J_{t-1}\oplus 1=(t-1)(t-2)\cdots 21t$.  Iterating this proves $J_{t} \swe J_{t-k}\oplus I_k$ for all $0\leq k \leq t$.  They provide a bijection $\phi_t^{*}:\Sav{\lambda}{F_t}\to\Sav{\lambda}{J_t}$, which we will show preserves sign.  Here we will construct the map only; the proof of its correctness can be found in \cite{Backelin2007}.

The map from $\Sav{\lambda}{F_t}$ to $\Sav{\lambda}{J_t}$ uses the following transformation.  At its heart, it systematically converts all occurrences of $J_t$ into occurrences of $F_t$.  Suppose $\pi\in\Sav{\lambda}{J_t}$.
Then we apply the following algorithm:

\begin{algorithm}\ \ 

\begin{enumerate}
 \item[Step 1.] Find all occurrences of $J_t$ in $\pi$ (as a transversal).  If $\pi$ contains no $J_t$, then stop and return $\pi$.
 \item[Step 2.] Find the smallest letter $\pi(i_1)$ such that $\pi(i_1)$ is the leftmost letter in an copy of $J_t$.
 \item[Step 3.] Find the leftmost letter $\pi(i_2)$ such that $i_1<i_2$ and there is an occurrence of $J_t$ such that $\pi(i_1)$ and $\pi(i_2)$ are the leftmost letters.
 \item[Step 4.] Find indices $i_3< i_4< \cdots< i_t$ one by one as described in step 3.  This yields a subpermutation $\pi(i_1)\pi(i_2)\cdots\pi(i_t)$, which is a copy of $J_t$.  See Figure \ref{fig:phi1}.
 \item[Step 5.] Form a new permutation $\pi'$ by moving $\pi(i_1)$ to the ${i_t}^{th}$ position, and each other $\pi(i_j)$ to the ${i_{j-1}}^{th}$ position.  Call this transformation $\theta(\pi)=\pi'$.  Observe that $\pi'(i_1)\pi'(i_2)\cdots\pi'(i_t)$ is a copy of $F_t$.  (See Figure \ref{fig:phi2}.)
 \item[Step 6.] Return to step 1.
\end{enumerate}
\end{algorithm}

We denote a single application of steps 2 through 5 by $\phi_t(\pi)$.  As described in steps 1 and 6, we compose $\phi_t$ with itself repeatedly until all copies of $J_t$ are eliminated.  We denote this repeated composition $\phi_t^{*}$.

\ifgraph
\begin{figure}[hb]
	\centering
		\includegraphics[height=1.5 in]{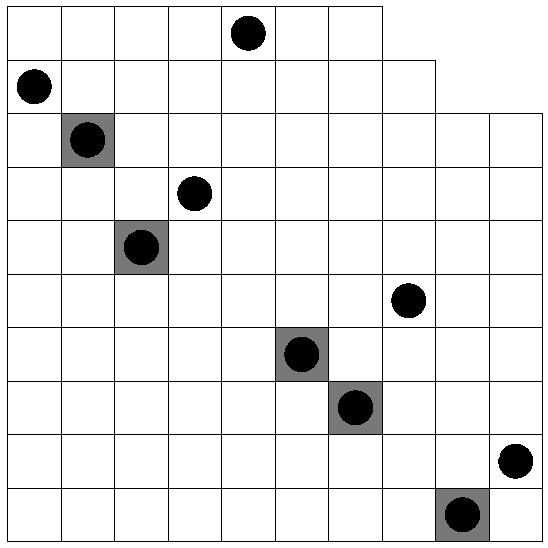}
	\caption{Selecting a copy of $J_t$}
	\label{fig:phi1}
\end{figure}

\begin{figure}[hb]
	\centering
		\includegraphics[height=1.5 in]{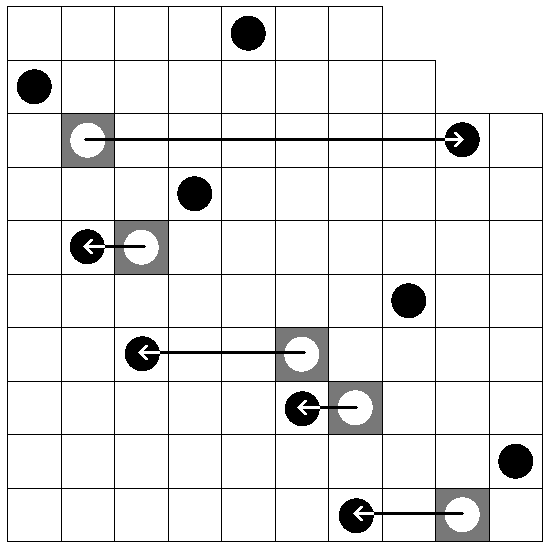}
	\caption{Applying the transformation $\theta$}
	\label{fig:phi2}
\end{figure}
\fi

For completeness we present the inverse map from $\Sav{\lambda}{J_t}$ to $\Sav{\lambda}{F_t}$.  It operates on the same principle, converting copies of $F_t$ into copies of $J_t$.

\begin{algorithm}\ \

\begin{enumerate}
 \item[Step 1.] Find all occurrences of $F_t$ in $\pi$ (as a transversal).  If $\pi$ contains no $F_t$, then stop and return $\pi$.
 \item[Step 2.] Find the largest letter $\pi(i_t)$ such that $\pi(i_t)$ is the rightmost letter in an copy of $F_t$.
 \item[Step 3.] Find the largest letter $\pi(i_{t-1})$ such that $i_{t-1}<i_{t}$ and there is an occurrence of $F_t$ such that $\pi(i_{t-1})$ and $\pi(i_{t})$ are the rightmost letters.
 \item[Step 4.] Find indices $i_{t-2}>i_{t-3} > \cdots > i_1$ one by one as described in step 3.  This yields a subpermutation $\pi(i_1)\pi(i_2)\cdots\pi(i_t)$, which is a copy of $F_t$.
 \item[Step 5.] Form a new permutation $\pi'$ by moving $\pi(i_t)$ to the ${i_1}^{th}$ position, and each other $\pi(i_j)$ to the ${i_{j+1}}^{th}$ position.  Call this transformation $\theta'(\pi)=\pi'$.  Observe that $\pi'(i_1)\pi'(i_2)\cdots\pi'(i_t)$ is a copy of $J_t$.
 \item[Step 6.] Return to step 1.
\end{enumerate}
\end{algorithm}

We denote the application of steps 2 through 5 by $\psi_t(\pi)$.  We compose $\psi_t$ with itself a certain number of times as outlined in steps 1 and 6, yielding a map $\psi_t^{*}: \Sav{\lambda}{J_t} \to \Sav{\lambda}{F_t}$.  Backelin et al.\! then show that $\phi_t$ and $\psi_t$ are inverses of one another, and hence so are $\phi_t^*$ and $\psi_t^*$.

We are now ready to prove our main result.

\begin{theorem}\label{theJtFt}
 $J_t \eswe F_t$ for all odd $t$.
\end{theorem}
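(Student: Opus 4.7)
The plan is to show that for odd $t$, the bijection $\phi_t^{*}:\Sav{\lambda}{F_t}\to\Sav{\lambda}{J_t}$ constructed by Backelin, West, and Xin is \emph{sign-preserving}. Since $\phi_t^{*}$ is already known to be a bijection, once sign-preservation is established it restricts to a bijection $\Eav{\lambda}{F_t}\to\Eav{\lambda}{J_t}$, and similarly for the odd transversals, yielding $J_t\eswe F_t$.

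Because $\phi_t^{*}$ is a finite composition of the single-step transformations $\theta=\phi_t$, it suffices to prove that each application of $\theta$ preserves sign when $t$ is odd. Fix $\pi$ and let $i_1<i_2<\cdots<i_t$ be the indices selected in steps 2--4, so that $\pi(i_1)>\pi(i_2)>\cdots>\pi(i_t)$ form a copy of $J_t$. The map $\theta$ leaves $\pi(j)$ fixed for $j\notin\{i_1,\ldots,i_t\}$, and on the chosen indices it produces $\pi'$ with
\begin{equation*}
\pi'(i_1)=\pi(i_2),\ \pi'(i_2)=\pi(i_3),\ \ldots,\ \pi'(i_{t-1})=\pi(i_t),\ \pi'(i_t)=\pi(i_1).
\end{equation*}

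The key observation is that this is precisely $\pi'=\pi\circ c$, where $c$ is the $t$-cycle $(i_1\ i_2\ \cdots\ i_t)$ on $\{1,\ldots,n\}$ (fixing all other points). Consequently
\begin{equation*}
\sgn(\pi')=\sgn(\pi)\cdot\sgn(c)=\sgn(\pi)\cdot(-1)^{t-1},
\end{equation*}
which equals $\sgn(\pi)$ exactly when $t$ is odd. Thus for odd $t$ each pass through $\theta$ preserves the sign of the underlying permutation, and therefore so does $\phi_t^{*}$.

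The main (modest) obstacle is verifying the cycle identification $\pi'=\pi\circ c$ cleanly from the verbal description of step 5; once that is in hand, the rest is immediate from the classical fact that a $t$-cycle has sign $(-1)^{t-1}$. No separate analysis of the inverse map $\psi_t^{*}$ is needed, since sign-preservation of a bijection automatically forces sign-preservation of its inverse. Combining this sign-preserving bijection with Lemma~\ref{lem2}'s observation that a shape-Wilf-equivalence plus matching even-counts yields matching odd-counts completes the argument, so $J_t\eswe F_t$ for every odd $t$.
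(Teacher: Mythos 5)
Your proof is correct and follows essentially the same route as the paper: both identify the single step $\theta$ of $\phi_t$ as multiplication by the $t$-cycle $(i_1\,i_2\,\cdots\,i_t)$, which is an even permutation precisely when $t$ is odd, so $\phi_t^{*}$ (and hence its inverse $\psi_t^{*}$) preserves sign and restricts to a bijection $\Eav{\lambda}{F_t}\to\Eav{\lambda}{J_t}$. The closing appeal to Lemma~\ref{lem2} is unnecessary but harmless, since the restricted bijection already gives $\eav{\lambda}{F_t}=\eav{\lambda}{J_t}$ for every shape $\lambda$.
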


\begin{proof}
 Fix $t$ odd.  The theorem follows from the claim that $\phi_t$ preserve sign.  If $\phi_t$ preserves sign, then so does $\phi_t^*$.  Hence $\phi_t^*$ restricts to the map $\phi_t^*:\Eav{\lambda}{F_t}\to\Eav{\lambda}{J_t}$.  Since $\psi_t$ is the inverse of $\phi_t$, $\psi_t^{*}$ must also preserve sign and hence we have our desired bijection.

Thus it remains to show that $\phi_t$ preserves sign when $t$ is odd.  Careful inspection reveals that the map $\theta$ in Step 5 is merely multiplying by the cycle $(i_1\,i_2\,\cdots\,i_t)$.  Since an odd cycle is an even permutation, applying $\theta$ preserves sign.
\end{proof}

It should be noted that $\phi_t$ reverses sign when $t$ is even.  This follows from the fact that $\theta$ is an even cycle and hence its application reverses sign.  Since $\phi_t$ may be composed with itself either an even or odd number of times in the application of $\phi_t^*$, however, the composition $\phi_t^*$ neither preserves nor reverses sign for the entirety of $\Eav{\lambda}{F_t}$.

The restriction that $t$ be odd prevents the iteration which implies $J_t \swe J_{t-k} \oplus I_{k}$ for all $0\leq k \leq t$.  Applying the theorem once gets us $J_t \eswe J_{t-1} \oplus 1$, at which point $t-1$ is even and the theorem no longer applies.  Note that the general prefix reversal result is not true for even-Wilf-equivalence: for example, $1234 \not\ewe 4321$.

\section{Classifications}\label{sec:class}

This section makes the classification of 4-patterns under $\ewe$ explicit, as well as the partial classifications of patterns of length 5 and 6.

\subsection{Classification of \S{4}}

With Theorem \ref{theJtFt} above and sufficient numerical computation, we may classify all patterns $\sigma \in \S{4}$.  There are eleven equivalence classes in total.  The values of each $\eav{n}{\sigma}$ are listed for $n\leq 10$ in Table \ref{tabs4}

In $\S{4}$, only two non-trivial equivalences appear.  Since $\sigma \ewe \sigma^{rc}$ we get that $3214 \ewe 1432$ and $2134 \ewe 1243$.  Applying Theorem \ref{theJtFt} and Lemma \ref{lemeswe}, we get that $3214 = J_3 \oplus 1 \ewe  F_3\oplus 1 = 2134$ to complete the class.  The reverses of these patterns comprise the other non-trivial class, as per Lemma \ref{lem2}: $3421\ewe4312\ewe4123\ewe2341$.  Thus we obtain the classification shown in Table \ref{tabs4}.  Horizontal lines separate the even-Wilf classes: patterns in the same even-Wilf class appear in adjacent rows with no separating line.

\begin{table}[htp]
	\centering
		\begin{tabular}{|l|r|r|r|r|r|r|r|r|}
\hline
$\sigma$ & $\sgn(\sigma)$ & $\eav{4}{\sigma}$ & $\eav{5}{\sigma}$ & $\eav{6}{\sigma}$ & $\eav{7}{\sigma}$ & $\eav{8}{\sigma}$ &
$\eav{9}{\sigma}$ & $\eav{10}{\sigma}$ \\
\hline
2134 & -1    & 12 & 52 & 257 & 1381 & 7885 & 47181 & 293297\\ 
3214 & -1    & 12 & 52 & 257 & 1381 & 7885 & 47181 & 293297\\ 
1243 & -1    & 12 & 52 & 257 & 1381 & 7885 & 47181 & 293297\\
1432 & -1    & 12 & 52 & 257 & 1381 & 7885 & 47181 & 293297\\ 
\hline
4312 & -1    & 12 & 52 & 256 & 1380 & 7885 & 47181 & 293293\\ 
4123 & -1    & 12 & 52 & 256 & 1380 & 7885 & 47181 & 293293\\ 
3421 & -1    & 12 & 52 & 256 & 1380 & 7885 & 47181 & 293293\\ 
2341 & -1    & 12 & 52 & 256 & 1380 & 7885 & 47181 & 293293\\ 
\hline
2314 & 1     & 11 & 51 & 257 & 1371 & 7742 & 45622 & 277826\\ 
1423 & 1     & 11 & 51 & 257 & 1371 & 7742 & 45622 & 277826\\
3124 & 1     & 11 & 51 & 257 & 1371 & 7742 & 45622 & 277826\\ 
1342 & 1     & 11 & 51 & 257 & 1371 & 7742 & 45622 & 277826\\
\hline
4132 & 1     & 11 & 51 & 255 & 1369 & 7742 & 45622 & 277836\\
3241 & 1     & 11 & 51 & 255 & 1369 & 7742 & 45622 & 277836\\
4213 & 1     & 11 & 51 & 255 & 1369 & 7742 & 45622 & 277836\\
2413 & 1     & 11 & 51 & 255 & 1369 & 7742 & 45622 & 277836\\
\hline
2413 & -1    & 12 & 52 & 256 & 1370 & 7743 & 45623 & 277831\\ 
3142 & -1    & 12 & 52 & 256 & 1370 & 7743 & 45623 & 277831\\
\hline 
1234 & 1     & 11 & 51 & 258 & 1382 & 7879 & 47175 & 293311\\
\hline
4321 & 1     & 11 & 51 & 255 & 1379 & 7879 & 47175 & 293279\\
\hline
2143 & 1     & 11 & 51 & 256 & 1380 & 7885 & 47181 & 293301\\
\hline
3412 & 1     & 11 & 51 & 257 & 1381 & 7885 & 47181 & 293289\\
\hline
1324 & -1    & 12 & 52 & 258 & 1382 & 7903 & 47393 & 296002\\ 
\hline
4231 & -1    & 12 & 52 & 255 & 1380 & 7903 & 47393 & 295948\\ 
\hline
   \end{tabular}
 \caption{The classification of $\S{4}$ into $\ewe$-classes with values of $\eav{n}{\sigma}$ for $\sigma\in\S{4}$ and $n\leq 10$.}
 \label{tabs4}
\end{table}

\subsection{Partial Classification of $\S{5}$}

The techniques of the previous section imply a partial classification of $\S{5}$.  Based on computations for $\eav{n}{\sigma}$ for $n\leq 11$, there appear to be four non-trivial equivalence classes, listed in Table \ref{tabs5}.  Each row represents one trivial equivalence class with a chosen representative.  Two rows written adjacently with no separating line are proven above to be even-Wilf-equivalent, as discussed below.  Two rows written adjacently and separated by a dotted line are conjectured to be even-Wilf-equivalent based on numerical data for $n\leq 11$.  Solid lines separate rows which are not even-Wilf-equivalent.

\begin{table}[hp]
	\centering
		\begin{tabular}{|l|r|r|r|r|r|r|}
\hline
$\sigma$ & $\sgn(\sigma)$ & $\eav{7}{\sigma}$ & $\eav{8}{\sigma}$ & $\eav{9}{\sigma}$ & $\eav{10}{\sigma}$ & $\eav{11}{\sigma}$ \\
\hline
 12345 & 1 & 2293 & 16662 & 130897 & 1095344 & 9659368  \\
 23451 & 1 & 2293 & 16662 & 130897 & 1095344 & 9659368  \\
\hdashline[1pt/5pt]
 45312 & 1 & 2293 & 16662 & 130897 & 1095344 & 9659368  \\
 34512 & 1 & 2293 & 16662 & 130897 & 1095344 & 9659368  \\
\hline
 15432 & 1 & 2289 & 16662 & 130897 & 1095344 & 9659320  \\
 54321 & 1 & 2289 & 16662 & 130897 & 1095344 & 9659320  \\
\hdashline[1pt/5pt]
 21354 & 1 & 2289 & 16662 & 130897 & 1095344 & 9659320  \\
 21543 & 1 & 2289 & 16662 & 130897 & 1095344 & 9659320  \\
\hline
 12354 & -1 & 2291 & 16662 & 130907 & 1095344 & 9659344  \\ 
 12543 & -1 & 2291 & 16662 & 130907 & 1095344 & 9659344  \\
\hdashline[1pt/5pt]
 45321 & -1 & 2291 & 16662 & 130907 & 1095344 & 9659344  \\
 34521 & -1 & 2291 & 16662 & 130907 & 1095344 & 9659344  \\
\hline
 13524 & -1 & 2290 & 16627 & 130145 & 1081965 & 9450267  \\
\hdashline[1pt/5pt]
 42531 & -1 & 2290 & 16627 & 130145 & 1081965 & 9450267  \\
\hline
		\end{tabular}
	\caption{The classification of $\S{5}$ into $\ewe$-classes with values of $\eav{n}{\sigma}$ for $\sigma\in\S{5}$ and $n\leq 11$.}
	\label{tabs5}
\end{table}

The proven equivalences are each a corollary to Theorem \ref{theJtFt} in conjunction with the symmetries in Lemmas \ref{lem2} and \ref{lem3}.  For example, $12345 \ewe 23451$ since $12345^c = 54321$, $54321 \eswe \phi_5^{*}(54321) = 43215$, and $43215^c = 23451$.  Thus we see that $\pi\mapsto \phi_5^{*}(\pi^{c})^{c}$ provides the bijection $\Eav{n}{12345} \to \Eav{n}{23451}$.
\begin{itemize}
 \item $12345 \ewe 23451$  (under $\phi_5^{*}(\pi^{c})^{c}$)
 \item $45312 \ewe 34512$  (under $\psi_3^{*}(\pi^{c})^{c}$)
 \item $15432 \ewe 54321$  (under $\phi_5^{*}(\pi)^{rc}$)
 \item $21354 \ewe 21543$  (under $\psi_3^{*}(\pi^{rc})^{rc}$)
 \item $12354 \ewe 12543$  (under $\psi_3^{*}(\pi^{rc})^{rc}$)
 \item $45321 \ewe 34521$  (under $\psi_3^{*}(\pi^{c})^{c}$)
\end{itemize}

This leaves the following conjectured equivalences:
\begin{conjecture}\label{conjs5}
  The following equivalences hold:
   \begin{itemize}
     \item $12345 \ewe 45312$
     \item $54321 \ewe 21354$
     \item $12354 \ewe 45321$
     \item $13524 \ewe 42531$
    \end{itemize}
\end{conjecture}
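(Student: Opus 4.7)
The plan is to reduce each conjecture to a more manageable statement using the symmetries and sign-parity observations of Lemmas~\ref{lem:sgn}--\ref{lem3}, and then to attack the reduced statements bijectively.

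First, since $(12345)^r=54321$ and $(45312)^r=21354$, Lemma~\ref{lem2} shows that conjecture (a), combined with the known classical equivalence $12345\Swe 45312$ (a consequence of the results of Stankova and of Backelin--West--Xin~\cite{Backelin2007}), immediately implies conjecture (b). Next, for (c) and (d), observe that $45321=(12354)^r$ and $42531=(13524)^r$. By Lemma~\ref{lem:sgnsymm}, the reversal map $\pi\mapsto\pi^r$ preserves sign when $n\equiv 0,1\pmod 4$, giving $\eav{n}{12354}=\eav{n}{45321}$ and $\eav{n}{13524}=\eav{n}{42531}$ automatically in those cases. For $n\equiv 2,3\pmod 4$, reversal flips sign, so (c) and (d) for those $n$ are equivalent respectively to the signed-sum vanishing identities
\begin{equation*}
\sum_{\pi\in\Sav{n}{45321}}\sgn(\pi)=0 \qquad\text{and}\qquad \sum_{\pi\in\Sav{n}{13524}}\sgn(\pi)=0.
\end{equation*}
It remains to prove conjecture (a) and these two vanishing identities.

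For (a), the approach is to adapt the Backelin--West--Xin bijection underlying $12345\Swe 45312$, which is assembled by iterating Proposition~3.1 of \cite{Backelin2007} together with its skew-sum counterpart. I would track the sign contribution of each iteration, in the spirit of Theorem~\ref{theJtFt}: the hope is that each iteration amounts to multiplication by an even permutation, so that the composition preserves sign. The complication is that the natural decomposition $45312=I_2\ominus 1\ominus I_2$ involves outer blocks of length $2$, so the odd-$t$ hypothesis of Theorem~\ref{theJtFt} does not apply directly; one must verify that any sign flips in individual iterations cancel globally.

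For the vanishing identities required by (c) and (d), I would search for sign-reversing, fixed-point-free involutions on $\Sav{n}{45321}$ and $\Sav{n}{13524}$ when $n\equiv 2,3\pmod 4$. A natural candidate is an involution that swaps a canonical adjacent pair whose transposition preserves the relevant pattern avoidance; the challenges are to specify the pair canonically and to rule out fixed points for all $n$ in the relevant residue classes. Conjecture (d) is likely to be the main obstacle, because $13524$ does not decompose as a direct or skew sum of smaller patterns, so neither Lemma~\ref{lemeswe} nor its skew analog is available, and new combinatorial input---perhaps drawing on the structural theory of $13524$-avoiders due to Stankova and others---will likely be required.
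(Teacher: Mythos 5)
This statement is a \emph{conjecture} in the paper: the authors offer no proof, only numerical evidence for $n\leq 11$, and they explicitly leave all four equivalences open. Your submission is likewise not a proof --- it is a (mostly sound) reduction followed by a research plan, and every step that would actually establish the conjecture is left as a hope. Concretely, the reductions you do carry out are correct: the observation that (a) and (b) imply one another via Lemma~\ref{lem2} (together with the classical equivalence $12345\Swe 45312$) is exactly the remark the authors make after stating the conjecture, and your translation of (c) and (d) for $n\equiv 2,3\pmod 4$ into the vanishing of $\sum_{\pi\in\Sav{n}{\sigma}}\sgn(\pi)$ is a correct consequence of Lemma~\ref{lem:sgnsymm}. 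But these reductions only repackage the problem; they do not solve it.

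The genuine gaps are precisely the three items you defer. For (a), you give no argument that the iterated Backelin--West--Xin-style bijection for $45312$ preserves sign; indeed the paper's own discussion shows why this is delicate --- $\phi_t$ \emph{reverses} sign when $t$ is even, and because $\phi_t^{*}$ composes $\phi_t$ a permutation-dependent number of times, the composite neither preserves nor reverses sign uniformly, so ``sign flips cancel globally'' is exactly the kind of claim that fails for $J_4\eswe F_4$ and would need a new idea here. For the vanishing identities in (c) and (d), no involution is specified, no candidate is analyzed, and you yourself note that fixed points are not ruled out; naive adjacent-transposition involutions generally do have fixed points and do not obviously preserve avoidance of $45321$ or $13524$. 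For (d) you concede that no existing tool applies. Since none of these components is established, the proposal does not prove the statement; it only confirms that the statement is, as the paper says, an open conjecture.
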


Observe that Lemma \ref{lem2} implies that the first and second conjectured equivalences follow from one another.

The second conjectured equivalence class contains all patterns of the form $J_r\oplus J_s$ for all $r+s=5$ and $r,s\geq 0$, together with $21354$.  The first conjectured class contains the reverses of these.  A similar pattern seems to emerge in patterns of length 7, although again conjecturally.  This suggests the following more general statement:
\begin{conjecture}
 For odd $t$, $J_r \oplus J_s \ewe J_{t}$ for any $r+s=t$.
\end{conjecture}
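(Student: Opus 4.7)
The plan is to prove the stronger statement $J_r \oplus J_s \eswe J_t$ for all $r+s=t$ with $t$ odd; since $\eswe$ implies $\ewe$, this yields the conjecture. The approach generalises the sign-preserving bijection $\phi_t^*$ from the proof of Theorem~\ref{theJtFt}, which showed that the elementary move $\theta$ multiplies the permutation by the $t$-cycle $(i_1\,i_2\,\cdots\,i_t)$, an even permutation exactly when $t$ is odd. The key observation is that the same sign-preservation survives if we replace this single $t$-cycle by any cyclic shift of the $t$ positions, so a map analogous to $\phi_t^*$ should be buildable for any decomposition $(r,s)$ of $t$.

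Concretely, I would define a map $\phi_{r,s}^* \colon \Sav{\lambda}{J_r\oplus J_s} \to \Sav{\lambda}{J_t}$ whose elementary move locates a canonically chosen $J_t$-occurrence at positions $i_1<\cdots<i_t$ and then sends the value at position $i_j$ to position $i_{((j-1+r)\bmod t)+1}$. Since the $J_t$-subsequence places the values $t,t-1,\ldots,1$ at $i_1,\ldots,i_t$ in that order, the cyclic shift by $r$ rearranges them into the order $r,r-1,\ldots,1,t,t-1,\ldots,r+1$, which forms a $J_r\oplus J_s$-subsequence at the same positions (and with the same transversal cell $(t,i_t)\in\lambda$, so the transversal-containment condition is preserved). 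The case $(r,s)=(t-1,1)$ recovers exactly the Backelin--West--Xin map used in Theorem~\ref{theJtFt}. The inverse elementary move applies the reverse cyclic shift.

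For the sign, the cyclic shift by $r$ on $t$ positions decomposes into $d=\gcd(r,t)$ cycles each of length $t/d$, and so has sign $(-1)^{t-d}$. When $t$ is odd, every divisor $d$ of $t$ is itself odd, so $t-d$ is even and the shift is an even permutation. Therefore $\phi_{r,s}^*$ preserves sign, its restriction to $\Eav{\lambda}{J_r\oplus J_s}$ is a bijection onto $\Eav{\lambda}{J_t}$, and $J_r\oplus J_s \eswe J_t$.

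The main obstacle is the technical verification that the iterative algorithm underlying $\phi_t^*$ adapts correctly to this generalised setting: one must specify a canonical $J_t$-occurrence inside a $J_r\oplus J_s$-avoider (the natural extension of the leftmost-smallest rule of Steps~2--4 of Backelin--West--Xin), show that each elementary move yields a valid transversal still avoiding $J_r\oplus J_s$ but with strictly fewer $J_t$-occurrences (so the iteration terminates), and check that the inverse algorithm is well-defined. The BWX argument exploits the simple structure of $F_t=J_{t-1}\oplus 1$ (a single appended singleton) in an essential way, so extending to $J_r\oplus J_s$ with both $r,s\geq 2$ will require a more symmetric selection rule and a finer termination monovariant. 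A fallback would be to iterate Theorem~\ref{theJtFt} and Lemma~\ref{lemeswe} together with the rc-symmetry of Lemma~\ref{lem3}, but this is severely obstructed because after one application of Theorem~\ref{theJtFt} the relevant length drops to $t-1$, which is even, and the theorem no longer applies.
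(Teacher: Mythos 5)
This statement is left as a \emph{conjecture} in the paper---no proof is given---so there is nothing to compare your argument against; the only question is whether your sketch actually closes the conjecture, and it does not. The parts you do carry out are correct: the cyclic shift by $r$ carries a $J_t$-occurrence at positions $i_1<\cdots<i_t$ to a $J_r\oplus J_s$-occurrence at the same positions, transversality is preserved (every relevant cell lies weakly below and to the left of the corner cell $(\pi(i_1),i_t)\in\lambda$), the case $(r,s)=(t-1,1)$ does recover $\theta$, and the sign $(-1)^{t-\gcd(r,t)}$ is $+1$ for odd $t$ since every divisor of an odd number is odd. But everything that makes Theorem~\ref{theJtFt} a theorem rather than an observation is deferred to your final paragraph, and what you defer there is precisely the entire content of the Backelin--West--Xin argument: that the iteration terminates, that the terminal object avoids $J_t$, that the map lands in (and onto) $\Sav{\lambda}{J_t}$, and that an inverse selection rule recovers exactly the occurrence just created. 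In Theorem~\ref{theJtFt} the authors could take all of this for granted because BWX had already proved $\phi_t^*$ is a bijection; the only new work was the sign computation. Here no such bijection exists in the literature you can lean on, so the ``technical verification'' is not a loose end but the whole theorem.

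Two concrete points of failure. First, termination and invertibility genuinely exploit the singleton structure of $F_t=J_{t-1}\oplus 1$: the occurrence created by $\theta$ is a decreasing run capped by one large rightmost entry, which the inverse algorithm can locate unambiguously via its ``largest rightmost letter'' rule. When $r,s\geq 2$, a configuration containing two decreasing runs contains many overlapping copies of $J_r\oplus J_s$, so there is no evident canonical occurrence for the inverse move to target; moreover, converting a $J_t$ into $J_r\oplus J_s$ leaves behind decreasing runs of lengths $r$ and $s$ that can recombine with other entries of $\pi$ into new copies of $J_t$, so the naive monovariant (number of $J_t$-occurrences) need not decrease. Second, even the classical statement $J_r\oplus J_s\swe J_t$ does not follow from the results quoted in this paper---Lemma~\ref{lemswe} only permits replacing the \emph{first} summand of a direct sum, which yields $J_r\oplus J_s\swe I_r\oplus J_s$ but not $J_t$---so you cannot fall back on analyzing the sign of a known classical bijection. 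Until a selection rule, a termination argument, and an inverse are actually supplied, this is a plausible program, not a proof; the conjecture should be regarded as still open.
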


Also notice that the third and fourth conjectured equivalences in Conjecture \ref{conjs5} have been written in the form $\sigma\ewe\sigma^r$.  In the classical case this is trivial under the reversal map since $\Sav{n}{\sigma}^r = \Sav{n}{\sigma^r}$, but if $n=3,4 \pmod{4}$, then $\Eav{n}{\sigma}^r \cap \Eav{n}{\sigma^r}=\emptyset$ since $\Eav{n}{\sigma}^r$ contains only odd permutations by Lemma \ref{lem:sgnsymm}.

\subsection{Partial Classification of $\S{6}$}

For $\S{6}$ there are 10 non-trivial even-Wilf classes, plus two more conjectured based on numerical results.  These are listed below in Table \ref{tabs6}.  Each of these equivalences follows from Theorem \ref{theJtFt} and its symmetries.  In the classical case, classifying the length 6 patterns required an additional result provided by Stankova and West in \cite{Stankova2002}.  They prove that $312 \swe 231$, which in combination with Lemma \ref{lemswe} provides the equivalence  $312564 \Swe 231564$.  We have checked computationally for all Ferrers shapes $\lambda$ which lie in an $9\times 9$ box that $\eav{\lambda}{312} = \eav{\lambda}{231}$, and that $\eav{n}{231\oplus\alpha} = \eav{n}{312\oplus\alpha}$ for all $\alpha\in\S{1}\cup\S{2}\cup\S{3}\cup\S{4}$ and $n\leq 11$.  This naturally leads to Conjecture \ref{conjSW}, which would imply $312564 \eswe 231564$ (and $465312 \eswe 465132$ by Lemma \ref{lemeswe}).

\begin{conjecture}\label{conjSW}
  $312$ is even-shape-Wilf-equivalent to $231$.
\end{conjecture}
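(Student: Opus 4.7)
The plan is to attack Conjecture \ref{conjSW} by analyzing the bijection of Stankova and West that establishes $312 \swe 231$. Write this bijection as $f_\lambda : \Sav{\lambda}{312} \to \Sav{\lambda}{231}$. Since $\sav{\lambda}{312} = \sav{\lambda}{231}$ is already known, proving the conjecture reduces to checking that $f_\lambda$ preserves sign: if it does, then $f_\lambda$ restricts to a bijection $\Eav{\lambda}{312} \to \Eav{\lambda}{231}$, and Lemma \ref{lemeswe} immediately upgrades this to the desired equivalences $312 \oplus \sigma \eswe 231 \oplus \sigma$ for any $\sigma$, giving in particular $312564 \eswe 231564$ and $465312 \eswe 465132$. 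This mirrors the strategy used to prove Theorem \ref{theJtFt}, where the work was in tracking sign under $\phi_t$.

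First I would unpack the Stankova-West construction carefully. Their proof is a recursive corner-elimination procedure: one identifies a distinguished cell of $\lambda$, looks at the unique entry of $\pi$ in its row (or column), and uses an explicit local move that swaps or cycles a small number of entries of $\pi$ to realign the "blocker" pattern from $312$-type to $231$-type. The optimistic scenario, by direct analogy with Theorem \ref{theJtFt}, is that each elementary move is a $3$-cycle on the underlying permutation---an even permutation---and so the full bijection is a composition of $3$-cycles and automatically sign-preserving. The second step is therefore to verify this explicitly: describe each elementary move as a cycle and count its length.

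If the elementary moves are not pure $3$-cycles (because re-labeling of surrounding entries is forced by the change in shape during the recursion), then the sign contribution of each step must be computed directly and summed over the recursion. A useful reformulation here is the sign-balance quantity $\chi_\lambda(\sigma) := \sum_{\pi \in \Sav{\lambda}{\sigma}} \sgn(\pi) = \eav{\lambda}{\sigma} - \oav{\lambda}{\sigma}$. Because $\sav{\lambda}{312} = \sav{\lambda}{231}$, the conjecture is equivalent to the identity $\chi_\lambda(312) = \chi_\lambda(231)$, which can be attacked inductively: peel off the top row (or rightmost column) of $\lambda$, condition on which column (row) the removed entry occupies, and show that the recursions for $\chi_\lambda(312)$ and $\chi_\lambda(231)$ satisfy the same relation with the same base cases. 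The $312$/$231$-avoidance conditions are mirror-images under the vertical flip, and the sign changes under reflection are controlled by Lemma \ref{lem:sgnsymm}, giving a candidate for matching the two recursions.

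The hard part will be the global sign analysis of the Stankova-West bijection. Unlike $\phi_t$, whose effect on a single copy is a clean cyclic relabeling of the $t$ chosen positions, their procedure is recursive and shape-dependent, so individual moves may touch more than three entries, and the interaction between successive moves is subtle. If the pure-$3$-cycle decomposition fails, the fallback via $\chi_\lambda$ and induction on $\lambda$ is the most promising route, but it will require a careful case analysis keyed to the position of the corner entry and the parity of the shape dimensions, in the spirit of the mod-$4$ case split already used in Lemma \ref{lem2}. The computational evidence reported for $\lambda$ inside a $9 \times 9$ box provides a useful sanity check for any candidate recursion.
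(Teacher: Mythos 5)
The statement you are addressing is left as a \emph{conjecture} in the paper; the authors offer only computational evidence (equality of $\eav{\lambda}{312}$ and $\eav{\lambda}{231}$ for all Ferrers shapes $\lambda$ inside a $9\times 9$ box, plus checks of the implied equivalences $\eav{n}{312\oplus\alpha}=\eav{n}{231\oplus\alpha}$ for small $\alpha$ and $n$), so there is no proof in the paper to compare against. Your text is a research plan rather than a proof: at no point do you establish either of the two claims on which your strategy rests. The first (that each elementary move of the Stankova--West construction is a $3$-cycle, hence that their bijection preserves sign) is explicitly labeled an ``optimistic scenario'' and never verified; the second (an inductive matching of the sign-balance quantities $\chi_\lambda(312)$ and $\chi_\lambda(231)$) is described as a ``fallback'' whose case analysis is acknowledged to be needed but is not carried out. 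A proof must commit to one of these routes and complete it.

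Beyond the incompleteness, your description of the Stankova--West argument is not accurate enough to support the analogy with Theorem \ref{theJtFt}. Their proof of $312\swe 231$ does not proceed by a composition of local cyclic relabelings in the style of $\phi_t$; it is a structural induction on the shape that matches cardinalities of subfamilies of transversals rather than tracking individual entries, so there is no ready-made map whose sign behavior can be inspected move by move. This is presumably why the authors stop at a conjecture even though Lemma \ref{lemeswe} stands ready to propagate any base equivalence to $312564\eswe 231564$ and $465312\eswe 465132$. Your reformulation of the conjecture as the identity $\chi_\lambda(312)=\chi_\lambda(231)$ (given the known $\sav{\lambda}{312}=\sav{\lambda}{231}$) is correct and reasonable, but the reflection trick you propose for matching the two recursions does not apply: although $312$ and $231$ are related by reverse-complement as permutations, reversal and complementation do not act on transversals of a general Young diagram $\lambda$ (they do not send $\lambda$ to a Young diagram in the same orientation), so Lemma \ref{lem:sgnsymm} gives no leverage for arbitrary shapes. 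The only symmetry available, inversion/transposition, yields $\eav{\lambda}{312}=\eav{\lambda'}{231}$ for the conjugate shape $\lambda'$, which is not the desired identity. As it stands, the conjecture remains open.
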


This analogue of the Stankova-West result, combined with those discussed in the previous sections, would complete the classification of the length 6 patterns.

\begin{table}[htp]
  \centering
  \begin{tabular}{|l|r|r|r|r|r|r|}
\hline
$\sigma$ & $\sgn(\sigma)$ & $\eav{7}{\sigma}$ & $\eav{8}{\sigma}$ & $\eav{9}{\sigma}$ & $\eav{10}{\sigma}$ & $\eav{11}{\sigma}$ \\
\hline
543216 & 1 & 2501 & 19713 & 172417 & 1645790 & 16917552 \\
432156 & 1 & 2501 & 19713 & 172417 & 1645790 & 16917552 \\
\hline
612345 & -1 & 2502 & 19713 & 172417 & 1645800 & 16917562 \\
651234 & -1 & 2502 & 19713 & 172417 & 1645800 & 16917562 \\
\hline
213564 & -1 & 2502 & 19714 & 172392 & 1644933 & 16895077 \\
321564 & -1 & 2502 & 19714 & 172392 & 1644933 & 16895077 \\
\hline
465312 & 1 & 2501 & 19714 & 172392 & 1644930 & 16895074 \\
465123 & 1 & 2501 & 19714 & 172392 & 1644930 & 16895074 \\
\hline
213456 & -1 & 2502 & 19714 & 172418 & 1645799 & 16917561 \\
321456 & -1 & 2502 & 19714 & 172418 & 1645799 & 16917561 \\
\hline
654312 & 1 & 2501 & 19714 & 172418 & 1645791 & 16917553 \\
654123 & 1 & 2501 & 19714 & 172418 & 1645791 & 16917553 \\
\hline
213546 & 1 & 2501 & 19712 & 172417 & 1645814 & 16918707 \\
321546 & 1 & 2501 & 19712 & 172417 & 1645814 & 16918707 \\
\hline
645312 & -1 & 2502 & 19712 & 172417 & 1645838 & 16918725 \\
645123 & -1 & 2502 & 19712 & 172417 & 1645838 & 16918725 \\
\hline
213465 & 1 & 2501 & 19713 & 172417 & 1645791 & 16917553 \\
321465 & 1 & 2501 & 19713 & 172417 & 1645791 & 16917553 \\
321654 & 1 & 2501 & 19713 & 172417 & 1645791 & 16917553 \\
\hline
564312 & -1 & 2502 & 19713 & 172417 & 1645799 & 16917561 \\
564123 & -1 & 2502 & 19713 & 172417 & 1645799 & 16917561 \\
456123 & -1 & 2502 & 19713 & 172417 & 1645799 & 16917561 \\
\hline
231564 & 1 & 2501 & 19716 & 172388 & 1644575 & 16882865 \\
\hdashline[1pt/5pt]
312564 & 1 & 2501 & 19716 & 172388 & 1644575 & 16882865 \\
\hline
465132 & -1 & 2502 & 19716 & 172388 & 1644588 & 16882878 \\
\hdashline[1pt/5pt]
465213 & -1 & 2502 & 19716 & 172388 & 1644588 & 16882878 \\
\hline
\end{tabular}
\caption{The classification of $\S{6}$ into $\ewe$-classes with values of $\eav{n}{\sigma}$ for $\sigma\in\S{6}$ and $n\leq 11$.}
\label{tabs6}
\end{table}

\clearpage
\section{Conclusions and Future Directions}

In this paper we have established the foundation for a theory of even-Wilf-equivalence, parallel to the classical theory of Wilf-equivalence.  As with involution-Wilf-equivalence, the general trend appears to be that results in Wilf-equivalence have weaker versions for even-Wilf-equivalence.  For example, $\sigma \not\ewe \sigma^{r}$ but if $\sigma \ewe \tau$ then $\sigma^{r} \ewe \tau^{r}$.
Similarly, we have prove an $\ewe$-analogue to Backelin et al.'s result that $J \Swe F_t$; this requires $t$ to be be odd.
These results allow us to classify $\S{4}$ according to even-Wilf-equivalence, and to partially classify $\S{k}$ for larger $k$.

%Theoretical results %and numerical results
%thus far
Known even-Wilf-equivalences, and the ones we conjecture above based on numerical results, suggest that even-Wilf-equivalence is a refinement of Wilf-equivalence; we thus make the following conjecture:
\begin{conjecture}
 If $\sigma \ewe \tau$, then $\sigma \Swe \tau$.
\end{conjecture}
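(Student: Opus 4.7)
The hypothesis $\sigma \ewe \tau$ gives $\eav{n}{\sigma} = \eav{n}{\tau}$ for every $n$. Since $\sav{n}{\sigma} = \eav{n}{\sigma} + \oav{n}{\sigma}$, the conjecture reduces to the statement that even-Wilf-equivalence forces \emph{odd}-Wilf-equivalence, i.e.\ $\oav{n}{\sigma} = \oav{n}{\tau}$ for all $n$. This is the target I would aim for, and it makes the asymmetric nature of the problem explicit: we are given half the counting data and must recover the other half.

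The first tool I would deploy is the reversal identity implicit in Lemma \ref{lem:sgnsymm}: for $n \equiv 2, 3 \pmod 4$ reversal flips sign, so $\eav{n}{\sigma^r} = \oav{n}{\sigma}$, whereas for $n \equiv 0, 1 \pmod 4$ reversal is sign-preserving and merely gives $\eav{n}{\sigma^r} = \eav{n}{\sigma}$. Thus if one could strengthen Lemma \ref{lem2} by dropping its $\sigma \Swe \tau$ hypothesis and deducing $\sigma^r \ewe \tau^r$ from $\sigma \ewe \tau$ alone, one would immediately obtain $\oav{n}{\sigma} = \oav{n}{\tau}$ for all $n \equiv 2, 3 \pmod 4$, handling half of the conjecture for free.

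For the residues $n \equiv 0, 1 \pmod 4$, however, the entire reflection toolbox collapses: reversal, complement, inverse, and every composition thereof preserve sign in these residues, so no symmetry alone can interchange even and odd counts. My plan here would be to bridge between residue classes via an insertion recursion. If $\pi \in \S{n}$ and $\pi'_i \in \S{n+1}$ is obtained by inserting $n+1$ at position $i$, then $\sgn(\pi'_i) = (-1)^{n+1-i}\sgn(\pi)$, and $\pi'_i$ avoids $\sigma$ except when the newly inserted letter completes a copy of $\sigma$. Summing over all $\pi \in \Eav{n}{\sigma}$ and all valid $i$ should express $\eav{n+1}{\sigma}$ and $\oav{n+1}{\sigma}$ in terms of $\eav{n}{\sigma}$ together with correction counts that track near-copies of $\sigma$ at position $n+1$; if these correction counts can themselves be shown to be determined by the $\eav{m}{\cdot}$ data (perhaps by induction on the pattern length of $\sigma$), one can shift information between residue classes and close the argument.

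The main obstacle, and the essential difficulty of the conjecture, is precisely that knowing only the even counts seemingly discards half of the data, so any successful proof must uncover a nontrivial structural reason why the even counts determine the full counts. A more ambitious but likely more tractable strategy is to strengthen the target: show that every $\ewe$-equivalence is in fact witnessed by a sign-preserving bijection on all of $\S{n}$, thereby upgrading $\sigma \ewe \tau$ directly to $\sigma \eswe \tau$ and consequently to $\sigma \swe \tau$ and $\sigma \Swe \tau$. All of the $\ewe$-equivalences proven in this paper---most conspicuously those built from $\phi_t^*$ in Theorem \ref{theJtFt}, but also the Simion--Schmidt length-$3$ equivalences---arise from such sign-preserving maps, which is the principal evidence for the conjecture and suggests that this refinement is the correct statement to attack.
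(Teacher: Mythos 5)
This statement is stated in the paper as a conjecture and is not proved there, and your proposal does not prove it either; it is a collection of strategies each of which has an essential gap. The most serious problem is circularity in your first step: you propose to ``strengthen Lemma \ref{lem2} by dropping its $\sigma \Swe \tau$ hypothesis.'' But the only role of that hypothesis in the proof of Lemma \ref{lem2} is to supply $\sav{n}{\sigma}=\sav{n}{\tau}$, which together with $\eav{n}{\sigma}=\eav{n}{\tau}$ yields $\oav{n}{\sigma}=\oav{n}{\tau}$. Dropping the hypothesis and still concluding $\eav{n}{\sigma^r}=\eav{n}{\tau^r}$ for $n\equiv 2,3 \pmod 4$ is, by Lemma \ref{lem:sgnsymm}, exactly the assertion $\oav{n}{\sigma}=\oav{n}{\tau}$ for those $n$ --- which is (half of) the conjecture you are trying to prove. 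So this ``tool'' assumes the conclusion.

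The remaining two strategies are not carried out. For the insertion recursion, the identity $\sgn(\pi'_i)=(-1)^{n+1-i}\sgn(\pi)$ is fine, but the ``correction counts'' you need are the numbers of pairs $(\pi,i)$ for which inserting $n+1$ at position $i$ creates an occurrence of $\sigma$; these depend on the positional structure of near-occurrences of $\sigma$ inside $\sigma$-avoiders, refined by sign and by insertion slot. You give no reason why this refined data should be determined by the sequence $\eav{m}{\sigma}$ alone, and there is no evident induction on the length of $\sigma$ that would control it. Your final suggestion --- that every $\ewe$-equivalence is witnessed by a sign-preserving bijection, hence upgrades to $\eswe$ and then $\Swe$ --- is a plausible heuristic (and is consistent with how the equivalences in this paper are actually produced), but as written it is a restatement of a stronger open conjecture, not an argument. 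You should present this as supporting evidence and a possible line of attack, not as a proof; as far as the paper is concerned the statement remains open.
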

%The conjecture may be skewed, however, as much of the work in this paper was motivated by first looking at classical equivalences and looking for refinements.
%Even so, the available numerical data was thoroughly examined for a counterexample and none was found.
%The analogous conjecture (due to the second author) for
We note that the analogue (which has not been formally conjectured, but which motivated aspects of~\cite{jm11ac}) for involution-Wilf-equivalence\footnote{Two patterns $\sigma$ and $\tau$ are said to be involution-Wilf-equivalent if $\#\bigl(\Sav{n}{\sigma} \cap \I{n}\bigr) = \#\bigl(\Sav{n}{\tau} \cap \I{n}\bigr)$ for all $n$, where $\I{n}$ is the set of involutions of length $n$.} remains open.

Examining the equivalence classes under $\ewe$ suggests that even-Wilf-equivalence is a very strong condition.  Table \ref{tabWilf} summarizes the number of equivalence classes under classical and even-Wilf-equivalence.  Values for the number of equivalence classes under Wilf equivalence are taken from OEIS sequence A099952, \cite{A099952}.  Lower bounds for the even-Wilf-equivalence classes for 5- and 6-patterns are based on avoidance by permutations of length $n\leq 11$.  Upper bounds were obtained by assuming all conjectures above are false.

\begin{table}[htbp]\label{tabWilf}
	\centering
		\begin{tabular}{|r|c|c|c|c|c|c|}
\hline		
			             $n$ & 1 & 2 & 3 & 4 &    5      &    6\\
\hline
			Wilf-equivalence & 1 & 1 & 1 & 3 &   16      &   91 \\
\hline
 even-Wilf-equivalence & 1 & 1 & 2 &11 & $[35, 39]$ & $ \{216, 218\}$  \\
\hline
		\end{tabular}
		\caption{The number of equivalence classes for patterns of length $n$.}
\end{table}

There are many more trivial equivalence classes under $\ewe$ than in the classical case.  A possible weakening of even-Wilf-equivalence is perhaps to require that $\eav{n}{\sigma}=\eav{n}{\tau}$ only for ``most'' $n$.  For example, the results of Simion and Schmidt in \cite{Simion1985} imply that $\eav{n}{123}=\eav{n}{132}$ for any $n\neq 0 \pmod{4}$.  In other instances, data suggests pairs $(\sigma, \tau)$ such that $\eav{2n}{\sigma}=\eav{2n}{\tau}$ for all $n$.  For example, the enumeration schemes in \cite{Baxter2010} verify that $\eav{2n}{12345}=\eav{2n}{54321}$ for $n\leq 7$.  An investigation into these weakened forms of equivalence may yield a classification of patterns which more closely resembles Wilf-classification.

%\bibliographystyle{alpha}
%\bibliography{ab,adj}
%\input{bibliography.tex}
%\thebibliography{EvenAvoid}

\end{document}